\tikzstyle{vertex}=[circle, draw, inner sep=0pt, minimum size=4pt]
\newtheorem{theorem}{Theorem} 
\newtheorem{lemma}[theorem]{Lemma}
\newtheorem{corollary}[theorem]{Corollary}
\theoremstyle{definition}
\theoremstyle{remark}
\newtheorem{remark}{Remark}
\title[Dual Grothendieck polynomials via last-passage percolation]{
Dual Grothendieck polynomials via last-passage percolation
} 
\author[Damir Yeliussizov]{Damir Yeliussizov}
\address{KBTU, Almaty, Kazakhstan}
\email{\href{mailto:yeldamir@gmail.com}{yeldamir@gmail.com}}
\begin{document}
\begin{abstract}
The ring of symmetric functions has a basis of dual Grothendieck polynomials that are inhomogeneous $K$-theoretic deformations of Schur polynomials. We prove that dual Grothendieck polynomials determine 
column distributions for a directed last-passage percolation model. 
\end{abstract}
\maketitle
\section{Introduction}
In this note we show a surprising 
connection between  
(i) the dual Grothendieck polynomials that are deformations of Schur polynomials arising in $K$-theoretic Schubert calculus, 
and (ii) a directed last-passage percolation model (which can also be viewed as the {corner growth model} or a totally asymmetric simple exclusion process).

\subsection{Dual Grothendieck polynomials} The ring 
of symmetric functions has an inhomogeneous basis $\{g_{\lambda} \}$ called the {\it dual  Grothendieck polynomials}. The symmetric polynomials $g_{\lambda}(x_1, \ldots, x_n)$ can be defined via the following combinatorial formula
$$
g_{\lambda}(x_1, \ldots, x_n) := \sum_{\pi\, :\, \mathrm{sh}(\pi) = \lambda} \prod_{i = 1}^{n} x_i^{c_i(\pi)},
$$
where the sum runs over {\it plane partitions} $\pi$ of shape $\lambda$ with largest entry at most $n$ and $c_i(\pi)$ is the number of columns of $\pi$ containing $i$. It is easy to see that $g_{\lambda} = s_{\lambda} + \text{lower degree terms}$, where $s_{\lambda}$ is the Schur polynomial.
This basis was explicitly introduced and studied in \cite{lp} (and earlier implicitly in \cite{lenart, buch}) in relation to the $K$-theory of Grassmannians.  
More properties of these functions can also be found in \cite{dy, dy2}.

\subsection{Directed last-passage percolation} 
Let $W = (w_{ij})_{i,j\ge 1}$ be a random matrix with independent entries $w_{ij}$ that have geometric distribution with parameters $q_j \in (0,1)$, i.e. 
$$\mathrm{Prob}(w_{ij} = k)  = (1 - q_j)\, q_j^k, \quad k \in \mathbb{N}.$$ 
A lattice path $\Pi$ with vertices in $\mathbb{N}^2$ is called a {\it directed path} if it uses only steps of the form $(i,j) \to (i+1, j), (i,j+1)$. Define the {\it last-passage times} $G(m,n)$ as follows:
$$
G(m,n) := \max_{\Pi : (1,1) \to (m,n)} \sum_{(i,j) \in \Pi} w_{ij},
$$ 
where the maximum is over directed paths $\Pi$ from $(1,1)$ to $(m,n)$. 
The function $G$ presents certain random growth. This probabilistic model, which can also be viewed as the corner growth model or a totally asymmetric simple exclusion process (TASEP), was studied intensively (especially in the {\it iid} case $q_j = q$), see \cite{joh1, bar,  joh2, sep, romik} and references therein. Let us call the matrix $G = (G(m,n))_{m,n\ge 1}$ as the {\it percolation matrix}. 

\subsection{Column distributions of the percolation matrix} 
Our main result is the formula showing that joint distribution of elements along any column in the percolation matrix $G$ is proportional to evaluations of dual Grothendieck polynomials.
Let 
$$
\mathcal{P}_{m} := \{\lambda = (\lambda_1, \ldots, \lambda_m) \in \mathbb{N}^m : \lambda_1 \ge \cdots \ge \lambda_m \ge 0 \}
$$
be the set of integer partitions with at most $m$ parts.

\begin{theorem}\label{main}
Let $\lambda = (\lambda_1, \ldots, \lambda_m) \in \mathcal{P}_m$. 
The following formula holds 
$$
\mathrm{Prob}(G({m,n}) = \lambda_1, \ldots, G(1,n) = \lambda_m) = \prod_{i = 1}^{n}(1 - q_i)^m\, g_{\lambda}(q_1, \ldots, q_n). 
$$
\end{theorem}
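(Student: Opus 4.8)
The plan is to follow the percolation one matrix‑column at a time and to recognize the one‑step transition weights as the combinatorial data defining $g_\lambda$. For $0\le j\le n$ put $Y_j:=(G(1,j),\ldots,G(m,j))$, with the boundary convention $G(i,0)=0$, so that $Y_0=\varnothing$. From $G(i,j)=w_{ij}+\max(G(i-1,j),G(i,j-1))$ (with $G(0,j)=0$) one sees that $G(i,j)$ is weakly increasing both in $i$ and in $j$; hence each $Y_j$, after reversing the order of its coordinates, is a partition with at most $m$ parts, and $Y_0\subseteq Y_1\subseteq\cdots\subseteq Y_n$. Note $Y_n=\lambda$ is exactly the event in the statement. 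I would first observe that $Y_j$ is a deterministic function of $Y_{j-1}$ and the $j$‑th column $(w_{1j},\ldots,w_{mj})$ (run the recursion along the column in the order $i=1,2,\ldots,m$), and conversely that for any two partitions $\mu\subseteq\nu$ with at most $m$ parts there is a \emph{unique} nonnegative integer column turning $Y_{j-1}=\mu$ into $Y_j=\nu$ (solve the recursion for the $w_{ij}$; nonnegativity of the solution is exactly $\mu\subseteq\nu$). Thus $W\mapsto(Y_0\subseteq Y_1\subseteq\cdots\subseteq Y_n)$ is a bijection from $m\times n$ matrices with $Y_n=\lambda$ onto chains $\varnothing=\mu^{(0)}\subseteq\mu^{(1)}\subseteq\cdots\subseteq\mu^{(n)}=\lambda$ of partitions with at most $m$ parts.

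The one computational input is the identity: under this bijection the sum of the entries of the $j$‑th column of $W$ equals the number of columns of the Young diagram that the skew shape $\mu^{(j)}/\mu^{(j-1)}$ occupies, i.e. $\#\{k:(\mu^{(j)})'_k>(\mu^{(j-1)})'_k\}$. Writing $\mu=\mu^{(j-1)}$, $\nu=\mu^{(j)}$, I would prove this by checking that $w_{ij}$ counts exactly the cells of $\nu/\mu$ lying in a fixed row that are the bottommost cell of $\nu$ in their column; summing over the $m$ rows, these cells are in bijection with the columns $k$ for which $\nu$ is strictly taller than $\mu$. (Equivalently, one telescopes $\sum_i w_{ij}$ using $\max(a,b)=a+\max(b-a,0)$ and simplifies.) This is elementary, but it must be carried out, and it is the part of the argument sensitive to the directedness of the model.

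Next I would invoke the combinatorial formula for $g_\lambda$. A chain $\varnothing=\mu^{(0)}\subseteq\cdots\subseteq\mu^{(n)}=\lambda$ is the same data as a filling $\pi$ of $\lambda$ with weakly increasing rows and columns and entries in $\{1,\ldots,n\}$: put the value $j$ on every cell of $\mu^{(j)}/\mu^{(j-1)}$, so that the cells with value $\le j$ are exactly $\mu^{(j)}$. Under this identification a column of $\pi$ contains the value $j$ precisely when $\mu^{(j)}/\mu^{(j-1)}$ meets that column, so the quantity $\#\{k:(\mu^{(j)})'_k>(\mu^{(j-1)})'_k\}$ from the previous step is exactly $c_j(\pi)$. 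The formula in the introduction is stated for weakly decreasing plane partitions, but the weakly increasing (``reverse'') version is equivalent — apply the involution $i\mapsto n+1-i$ to the entries and use that $g_\lambda$ is a symmetric function — and it is the convenient one here.

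Putting this together, each matrix has probability $\prod_{i,j}(1-q_j)q_j^{w_{ij}}=\prod_{j=1}^n(1-q_j)^m q_j^{\sum_i w_{ij}}$, so summing over all $W$ with $Y_n(W)=\lambda$, equivalently over all such chains, equivalently over all reverse plane partitions $\pi$ of shape $\lambda$ with largest entry at most $n$,
\begin{equation*}
\mathrm{Prob}\bigl(G(m,n)=\lambda_1,\ldots,G(1,n)=\lambda_m\bigr)=\prod_{j=1}^n(1-q_j)^m\sum_{\pi}\prod_{j=1}^n q_j^{c_j(\pi)}=\prod_{j=1}^n(1-q_j)^m\,g_\lambda(q_1,\ldots,q_n).
\end{equation*}
The main obstacle is the column‑sum identity of the second paragraph; the rest — that the column process is Markov with the stated kernel, and that chains correspond bijectively to reverse plane partitions — is routine bookkeeping. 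In inductive form the same scheme amounts to reproving the branching rule $g_\lambda(q_1,\ldots,q_n)=\sum_{\mu\subseteq\lambda}g_\mu(q_1,\ldots,q_{n-1})\,q_n^{\#\{k:\lambda'_k>\mu'_k\}}$.
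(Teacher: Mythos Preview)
Your proposal is correct. Both your argument and the paper's rest on the same underlying bijection between $m\times n$ nonnegative integer matrices and plane partitions with at most $m$ rows and entries at most $n$, together with the column--sum identity $\sum_i w_{ij}=c_j(\pi)$. The difference lies in how the last--passage property is established. The paper defines the bijection $\Phi:\pi\mapsto D$ statically via descent level sets $d_{i\ell}=|\{j:\pi_{ij}=\ell>\pi_{i+1,j}\}|$, and then proves as a separate lemma (Lemma~\ref{one}) that the last--passage times through $D$ recover the row lengths of $\pi$, by bounding an arbitrary directed path from above and exhibiting an optimal one. You instead build the bijection dynamically, one column at a time, directly from the recursion $G(i,j)=w_{ij}+\max(G(i-1,j),G(i,j-1))$; this makes the last--passage property tautological and replaces the path argument by the elementary inversion $w_{ij}=\nu_i-\max(\nu_{i-1},\mu_i)$. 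Your route is a bit more self--contained (no appeal to other references for $\Phi$ being a bijection, no path lemma) and makes the Markov/branching structure explicit, at the price of working with reverse plane partitions and invoking the symmetry of $g_\lambda$ at the end; the paper's route keeps the weakly--decreasing convention throughout but needs a rotation of $W$ and the path--based lemma. The column--sum identity you flag as ``the main obstacle'' is exactly the content of the paper's Lemma~\ref{gen}, and your sketch of it (bottommost cells of $\nu/\mu$ in each column) is correct.
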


On one hand, this formula can be viewed as a natural probabilistic interpretation of dual Grothendieck polynomials. On the other hand, it can also be used for computing distribution formulas in the percolation matrix. We prove Theorem~\ref{main} combinatorially, using certain bijection between plane partitions and integer matrices. We then give some applications. 
For example, we present new generating function identities for dual Grothendieck polynomials and determinantal formulas for distributions of the percolation matrix.  

\section{Proof of the main theorem}
\subsection{Plane partitions and $\mathbb{N}$-matrices}
An {\it $\mathbb{N}$-matrix} is a matrix of nonnegative integers with only finitely many nonzero elements. A {\it plane partition} is an $\mathbb{N}$-matrix $\pi = (\pi_{ij})_{i, j \ge 1}$ such that 
$$
\pi_{ij} \ge \pi_{i+1\, j}, \quad \pi_{i j} \ge \pi_{i\, j+1}, \quad i,j \ge 1.
$$
The {\it shape} of $\pi$ is defined as 
$\mathrm{sh}(\pi) := \{(i,j) : \pi_{i j} > 0 \}$. 

\vspace{0.5em}

Given a plane partition $\pi$, define the {\it descent level sets} 
\begin{align*} 
D_{i \ell} := \{ j : \pi_{i j} = \ell  > \pi_{i+1\, j} \},
\end{align*}
i.e. $D_{i \ell}$ is the set of column indices of the entry $\ell$ in $i$th row of $\pi$ that are strictly larger than the entry below. 
Let $d_{i \ell} := |D_{i \ell}|$ and $D := (d_{i \ell})_{i, \ell \ge 1}$.

\vspace{0.5em}

Define the map $\Phi : \text{\{plane partitions\}} \to \{\mathbb{N}\text{-matrices}\}$ by setting 
\begin{align*}
\Phi(\pi) = D.
\end{align*}
For example,
{\small
\begin{center}
$\Phi : $
\ytableausetup{aligntableaux = center}
\begin{ytableau}
 4 & {4} & {2} \\
 {4} & 2 & {1} \\  
 {2} & {2}
\end{ytableau}
$~\longmapsto
\left(
\begin{matrix}
0 & 1 & 0 & 1\\
1 & 0 & 0 & 1\\
0 & 2 & 0 & 0\\
\end{matrix}
\right)
$
\end{center}
}

In fact, $\Phi$ is a bijection; we can uniquely reconstruct $\pi$ given the matrix $D$. We refer to \cite{dy4,dy5} for more on this bijection. Denote by $\mathrm{PP}(m,n)$ the set of plane partitions with at most $m$ rows and largest entry at most $n$. In particular, if $\pi \in \mathrm{PP}(m,n)$, then $\mathrm{sh}(\pi) \in \mathcal{P}_m$ and $D = \Phi(\pi)$ has at most $m$ rows and $n$ columns that are nonzero.

\begin{lemma}\label{gen}
Let $W = (w_{i \ell})_{i,\ell = 1}^{m,n}$ be an $m \times n$ matrix, where $w_{i \ell}$ are independent geometrically distributed random variables with parameters $q_\ell$. Let $\pi \in \mathrm{PP}(m,n)$.  
Then 
$$\mathrm{Prob}(W = \Phi(\pi))= \prod_{\ell = 1}^{n} (1 - q_\ell)^m q_\ell^{c_\ell(\pi)} 
$$
where $c_\ell(\pi)$ is the number of columns of $\pi$ containing $\ell$.
\end{lemma}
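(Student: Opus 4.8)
The plan is to compute $\mathrm{Prob}(W = \Phi(\pi))$ directly from the geometric pmf and independence, and then reduce the statement to a single combinatorial identity about the map $\Phi$. Writing $D = \Phi(\pi) = (d_{i\ell})$, independence of the entries gives
$$
\mathrm{Prob}(W = D) = \prod_{i=1}^{m}\prod_{\ell=1}^{n} \mathrm{Prob}(w_{i\ell} = d_{i\ell}) = \prod_{i=1}^{m}\prod_{\ell=1}^{n} (1-q_\ell)\, q_\ell^{\,d_{i\ell}} = \prod_{\ell=1}^{n} (1-q_\ell)^{m}\, q_\ell^{\,S_\ell},
$$
where $S_\ell := \sum_{i=1}^{m} d_{i\ell}$ is the sum of the entries in column $\ell$ of $D$ (this uses that, since $\pi \in \mathrm{PP}(m,n)$, the matrix $D$ is supported in the first $m$ rows and $n$ columns). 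Comparing with the asserted formula, it remains to prove the identity $S_\ell = c_\ell(\pi)$ for every $\ell \ge 1$.

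To establish $S_\ell = c_\ell(\pi)$ I would fix $\ell$ and count the contribution of each column of $\pi$ to $S_\ell = \sum_i |D_{i\ell}| = \#\{(i,j) : \pi_{ij} = \ell > \pi_{i+1\,j}\}$. Fix a column index $j$. The $j$-th column $\pi_{1j}\ge \pi_{2j}\ge\cdots\ge 0$ of $\pi$ is weakly decreasing, so the value $\ell$ occupies a (possibly empty) contiguous run of rows in that column. If the run is nonempty, let $i_0$ be its bottom row; then $\pi_{i_0 j} = \ell$ and $\pi_{i_0+1\,j} < \ell$ (also in the boundary case where row $i_0$ is the last nonzero row, since $\pi_{i_0+1\,j} = 0 < \ell$), so $j \in D_{i_0\ell}$, whereas for every other row $i$ either $\pi_{ij}\ne \ell$ or $\pi_{i+1\,j} = \ell$, hence $j\notin D_{i\ell}$. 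If the run is empty, i.e.\ $\ell$ does not occur in column $j$, then $j\notin D_{i\ell}$ for all $i$. Thus column $j$ contributes exactly $1$ to $S_\ell$ when $\ell$ appears in it and $0$ otherwise; summing over $j$ yields $S_\ell = c_\ell(\pi)$, which completes the proof.

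The argument is short, and essentially all of its content is the identity $S_\ell = c_\ell(\pi)$; the point to handle carefully is the bookkeeping that each maximal run of the value $\ell$ along a column produces exactly one ``descent of level $\ell$'' (located at the bottom of the run), together with the boundary case where that run reaches the last nonzero row. Note that bijectivity of $\Phi$ is not needed here — only the definition of the descent level sets $D_{i\ell}$ and the weak monotonicity of the columns of $\pi$.
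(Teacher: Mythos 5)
Your proof is correct and follows essentially the same route as the paper: expand $\mathrm{Prob}(W=\Phi(\pi))$ by independence and reduce to the identity $\sum_i d_{i\ell} = c_\ell(\pi)$, which the paper simply asserts and you verify carefully via the bottom-of-run argument in each column.
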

\begin{proof}
Let $(d_{i \ell})= \Phi(\pi)$, i.e. $d_{i \ell} = |\{j: \pi_{ij} = \ell > \pi_{i+1 j} \}|$ for $i \in [1,m], \ell \in [1,n]$. Since the entries of $W$ are independent we obtain that 
$$
\mathrm{Prob}(W = \Phi(\pi)) = \prod_{i = 1}^{m} \prod_{\ell = 1}^{n} \mathrm{Prob}(w_{i \ell} = d_{i \ell}) = \prod_{i = 1}^m \prod_{\ell = 1}^n (1 - q_{\ell})\, q_{\ell}^{d_{i \ell}} =  \prod_{\ell = 1}^{n} (1 - q_\ell)^m q_\ell^{c_\ell(\pi)} 
$$
as $\sum_{i} d_{i \ell} = c_\ell(\pi)$, as needed.
\end{proof}

\begin{lemma}\label{one} 
Let $\pi \in \mathrm{PP}(m,n)$ 
and $\Phi (\pi)= D  =(d_{i \ell})$. 
Let $\lambda = \mathrm{sh}(\pi)$ be the shape of $\pi$. We have for all $k \in [1,m]$
\begin{align}\label{lamm}
\lambda_k  = \max_{\Pi : (k, 1) \to (m, n)} \sum_{(i,\ell) \in \Pi} d_{i \ell},
\end{align}
where the maximum is over directed paths 
$\Pi$ from $(k, 1)$ to $(m,n)$. 
\end{lemma}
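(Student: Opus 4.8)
The plan is to prove a version of \eqref{lamm} that also records the starting column. For $k\in[1,m]$ and $a\in[1,n]$ set
$$
F(k,a)\;:=\;\max_{\Pi\,:\,(k,a)\to(m,n)}\ \sum_{(i,\ell)\in\Pi}d_{i\ell},
$$
the maximum over directed paths from $(k,a)$ to $(m,n)$. I claim that
$$
F(k,a)\;=\;\#\{j:\pi_{kj}\ge a\}.
$$
Since row $k$ of $\pi$ is weakly decreasing with exactly $\lambda_k$ positive entries, the case $a=1$ reads $F(k,1)=\lambda_k$, which is \eqref{lamm}. The bound $F(k,a)\le\#\{j:\pi_{kj}\ge a\}$ is easy and instructive: the cells visited by a directed path form a chain for the coordinatewise order, whereas for each fixed $j$ the cells $(i,\ell)$ with $j\in D_{i\ell}$ form an antichain (since $\ell=\pi_{ij}$ strictly decreases in $i$ along them), so a path meets at most one such cell per column; moreover a cell $(i,\ell)$ with $j\in D_{i\ell}$, $i\ge k$ and $\ell\ge a$ can exist only if $\pi_{kj}\ge a$.

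For the exact value I would induct on $s=(m-k)+(n-a)$. If $k=m$ or $a=n$, the path from $(k,a)$ to $(m,n)$ is unique and the count is checked directly (using $\pi_{m+1,j}=0$ and that $n$ is the largest entry, so that the value $n$, when present in a column, fills a single top block). If $k<m$ and $a<n$, peeling the vertex $(k,a)$ off an optimal path and applying the inductive hypothesis at $(k+1,a)$ and at $(k,a+1)$ yields the Bellman recursion
$$
F(k,a)=d_{ka}+\max\bigl(F(k+1,a),\,F(k,a+1)\bigr)=d_{ka}+\max\bigl(\#\{j:\pi_{k+1,j}\ge a\},\ \#\{j:\pi_{kj}\ge a+1\}\bigr),
$$
so it remains to verify
$$
\#\{j:\pi_{kj}\ge a\}=d_{ka}+\max\bigl(\#\{j:\pi_{k+1,j}\ge a\},\ \#\{j:\pi_{kj}\ge a+1\}\bigr).
$$

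Write $P=\#\{j:\pi_{kj}\ge a+1\}$, $Q=\#\{j:\pi_{k+1,j}\ge a\}$ and $R=\#\{j:\pi_{kj}=a=\pi_{k+1,j}\}$. Partitioning $\{j:\pi_{kj}\ge a\}$ by the value of $\pi_{kj}$, and, for $\pi_{kj}=a$, by whether $\pi_{k+1,j}<a$ or $\pi_{k+1,j}=a$, gives $\#\{j:\pi_{kj}\ge a\}=d_{ka}+P+R$, so the identity reduces to $\max(P,Q)=P+R$. One inclusion is unconditional: $\pi_{k+1,j}\ge a$ forces $\pi_{kj}\ge a$, and if in addition $\pi_{kj}=a$ then $\pi_{k+1,j}=a$; hence every column counted by $Q$ is counted by $P$ or by $R$, so $Q\le P+R$, and if $R=0$ this already gives $Q\le P$ and $\max(P,Q)=P=P+R$. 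The case $R>0$ is the main obstacle: one must rule out an \emph{overshoot}, that is, establish the reverse inclusion $P\le Q$. This is where being a plane partition is essential: if $\pi_{k,j_0}=\pi_{k+1,j_0}=a$ for some $j_0$, then any column $j$ with $\pi_{kj}\ge a+1>\pi_{k,j_0}$ satisfies $j<j_0$ because row $k$ is weakly decreasing, and hence $\pi_{k+1,j}\ge\pi_{k+1,j_0}=a$ because row $k+1$ is weakly decreasing. Thus the $P$-columns sit among the $Q$-columns; together with the $R$-columns, which are disjoint from them and also among the $Q$-columns, we get $Q\ge P+R$, hence $Q=P+R\ge P$ and $\max(P,Q)=P+R$, closing the induction. (A maximizing path can alternatively be constructed explicitly, greedily column by column, but extracting both bounds at once from the recursion is cleaner.)
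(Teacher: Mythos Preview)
Your proof is correct. Both you and the paper establish the upper bound by the same antichain/disjointness observation: for a fixed column index $j$, the cells $(i,\ell)$ with $j\in D_{i\ell}$ are pairwise incomparable, so a directed path can pick up each column of $\pi$ at most once, and only columns with $\pi_{kj}\ge a$ can contribute. For the matching lower bound, however, the paper proceeds differently: it attempts to exhibit a maximizing path directly, by tracking for each column $j\le\lambda_k$ the row $i_j$ in which the entry $\ell_j=\pi_{kj}$ ``ends'', and then arguing that the marked cells $(i_j,\ell_j)$ all lie on a common directed path of weight at least $\lambda_k$. Your route instead strengthens the statement to the two-parameter identity $F(k,a)=\#\{j:\pi_{kj}\ge a\}$ and proves it by induction through the Bellman recursion, reducing everything to the clean row-count identity $\max(P,Q)=P+R$. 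The paper's construction is shorter and more visual when it works; your inductive argument is more systematic, yields the finer formula $F(k,a)=\#\{j:\pi_{kj}\ge a\}$ for free, and avoids having to verify that the marked cells actually form a chain.
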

\begin{proof}
Take an arbitrary directed path $\Pi$ from $(k,1)$ to $(m,n)$. Then the descent level sets $D_{i \ell}$ for $(i,\ell) \in \Pi$ are pairwisely disjoint. Using this property and since $i \ge k$ for all $(i,\ell) \in \Pi$, we obtain
\begin{equation}\label{lal}
\sum_{(i,\ell) \in \Pi} d_{i \ell} = \sum_{(i,\ell)} |\{ j : \pi_{i j} = \ell > \pi_{i + 1 j} \}| \le \lambda_k.
\end{equation}
On the other hand, suppose the $k$-th row of $\pi$ has entries $(\ell_1 \ge \cdots \ge \ell_{s} > 0)$ where $s = \lambda_k$. Assume the entries $\ell_1, \ldots, \ell_s$ end in rows $i_1 \ge \cdots \ge i_s$ of $\pi$. Then there is a directed path $\Pi$ from $(k,1)$ to $(m,n)$ containing all points $(i_s, \ell_s), \ldots, (i_1, \ell_1)$. The weight of any such path is at least 
$\sum_{j} d_{i_j \ell_j} \ge s = \lambda_k$. Combining this with the inequality \eqref{lal} we obtain \eqref{lamm}. 
\end{proof}

\subsection*{Proof of Theorem~\ref{main}} As we are interested only in joint distribution of the last-passage times $(G(m,n), G(m-1, n), \ldots, G(1,n))$ we can restrict the source random matrix $W$ to the first $m$ rows and $n$ columns. Consider $w_{ij}$ as geometric with parameter $q_{n - j +1}$. By rotation symmetry it is obvious that the corresponding last-passage times produced from the matrix $W' = (w_{m - i + 1, n - j + 1})_{i,j = 1}^{m,n}$ have the same distributions. Now, by definition of $g_{\lambda}$ and using Lemma~\ref{gen} we obtain that
\begin{align*}
\prod_{\ell = 1}^{n}(1 - q_{\ell})^m\, g_{\lambda}(q_1, \ldots, q_n) 
	&= \sum_{\pi \in \mathrm{PP}(n,m),\, \mathrm{sh}(\pi) =\lambda} \prod_{\ell = 1}^n (1 - q_\ell)^m q_\ell^{c_{\ell}(\pi)}\\ 
	&= \sum_{\pi \in \mathrm{PP}(n,m),\, \mathrm{sh}(\pi) =\lambda} \mathrm{Prob}(W' = \Phi(\pi))
\end{align*}
By Lemma~\ref{one} if $W' = \Phi(\pi)$ and $\mathrm{sh}(\pi) = \lambda$, we have
$G(m,n) = \lambda_1, \ldots, G(1,n) = \lambda_m$. Therefore,
$$
\sum_{\pi \in \mathrm{PP}(n,m),\, \mathrm{sh}(\pi) =\lambda} \mathrm{Prob}(W' = \Phi(\pi)) = \mathrm{Prob}(G(m,n) = \lambda_1, \ldots, G(1,n) = \lambda_m)
$$
and hence the result is proved. 

\vspace{0.5em}

Next we discuss some applications.

\section{Last-passage time distributions}
\subsection{Parameter symmetry} Since the polynomials $g_{\lambda}$ are symmetric we obtain
\begin{corollary} The distribution
$$
\mathrm{Prob}(G(m,n) = \lambda_1, \ldots, G(1,n) = \lambda_m) 
$$
is invariant under permutations of the parameters $(q_1, \ldots, q_n)$.
\end{corollary}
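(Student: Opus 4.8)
The plan is to deduce the statement directly from Theorem~\ref{main}, so that essentially nothing new has to be proved. By that theorem,
$$
\mathrm{Prob}(G(m,n) = \lambda_1, \ldots, G(1,n) = \lambda_m) = \prod_{i=1}^{n}(1 - q_i)^m\, g_{\lambda}(q_1, \ldots, q_n),
$$
so it suffices to show that the right-hand side is unchanged under any permutation of the tuple $(q_1, \ldots, q_n)$. I would split this into its two factors. The prefactor $\prod_{i=1}^{n}(1-q_i)^m$ is a product ranging over all indices $i$, hence manifestly symmetric in $q_1, \ldots, q_n$.

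For the second factor I would invoke the fact recalled in the introduction that $\{g_{\lambda}\}$ is a basis of the \emph{ring of symmetric functions}; in particular $g_{\lambda}(x_1, \ldots, x_n)$ is a symmetric polynomial in $x_1, \ldots, x_n$, so $g_{\lambda}(q_1, \ldots, q_n)$ depends only on the multiset $\{q_1, \ldots, q_n\}$. Since a product of two expressions that are each symmetric in the $q_i$ is again symmetric, the displayed probability is invariant under permutations of $(q_1, \ldots, q_n)$, which is the assertion.

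The only subtlety worth flagging is that the symmetry of $g_{\lambda}$ is not transparent from its combinatorial definition: the exponent $c_i(\pi)$, being the number of columns of $\pi$ containing $i$, does not behave symmetrically on individual plane partitions, and it is only the full sum over $\pi$ with $\mathrm{sh}(\pi)=\lambda$ that is symmetric. This is a standard property of the dual Grothendieck basis (see \cite{lp}), so in the proof I would simply cite it rather than reprove it. One could in principle avoid relying on it and instead argue probabilistically --- exhibiting an exchangeability of the column parameters directly at the level of the bijection $\Phi$ of the previous section --- but that is a strictly longer route, and since Theorem~\ref{main} already encodes exactly the symmetry needed, I would not pursue it here.
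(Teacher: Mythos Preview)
Your proposal is correct and matches the paper's approach exactly: the paper simply states ``Since the polynomials $g_{\lambda}$ are symmetric we obtain'' the corollary, which is precisely your argument via Theorem~\ref{main} together with the manifest symmetry of the prefactor $\prod_{i=1}^n(1-q_i)^m$. Your added remark on the non-obviousness of the symmetry of $g_\lambda$ from its combinatorial definition is accurate commentary but not needed for the proof.
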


\subsection{Formulas for last-passage distributions}
First, using branching formulas for $g_{\lambda},$ we easily obtain the following distribution formula as well.
\begin{corollary}\label{le}
We have
\begin{align*}
\mathrm{Prob}(G({m,n}) \le \lambda_1, \ldots, G(1,n) \le \lambda_m) = \prod_{i = 1}^{n}(1 - q_i)^m\, g_{\lambda}(1, q_1, \ldots, q_n). 
\end{align*}
\end{corollary}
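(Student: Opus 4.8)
The plan is to write the left-hand side as a sum of the point probabilities provided by Theorem~\ref{main}, and then to collapse that sum into a single evaluation of $g_\lambda$ by means of a branching rule.

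First I would record the (deterministic) monotonicity $G(m,n)\ge G(m-1,n)\ge\cdots\ge G(1,n)\ge 0$, valid for every realization of $W$: appending the step $(k,n)\to(k+1,n)$ to a directed path ending at $(k,n)$ produces a directed path ending at $(k+1,n)$ whose weight has only increased, since $w_{ij}\ge 0$. Hence the column vector $(G(m,n),\ldots,G(1,n))$ always lies in $\mathcal P_m$, so the event $\{G(m,n)\le\lambda_1,\ldots,G(1,n)\le\lambda_m\}$ is the disjoint union, over partitions $\mu\in\mathcal P_m$ with $\mu_k\le\lambda_k$ for all $k$ (equivalently $\mu\subseteq\lambda$ as Young diagrams), of the events $\{G(m,n)=\mu_1,\ldots,G(1,n)=\mu_m\}$. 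Summing the identity of Theorem~\ref{main} over these $\mu$ gives
$$\mathrm{Prob}(G(m,n)\le\lambda_1,\ldots,G(1,n)\le\lambda_m)=\prod_{i=1}^{n}(1-q_i)^m\sum_{\mu\subseteq\lambda}g_\mu(q_1,\ldots,q_n).$$

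It then remains to establish the branching identity $\sum_{\mu\subseteq\lambda}g_\mu(x_1,\ldots,x_n)=g_\lambda(1,x_1,\ldots,x_n)$, which I would deduce directly from the combinatorial definition of $g_\lambda$. By symmetry the right-hand side equals $g_\lambda(x_1,\ldots,x_n,1)$, which enumerates plane partitions $\pi$ of shape $\lambda$ with largest entry at most $n+1$, weighted by $\prod_{\ell=1}^{n}x_\ell^{c_\ell(\pi)}$ (the value $n+1$ contributing the harmless factor $1^{c_{n+1}(\pi)}=1$). Decrementing every entry of $\pi$ by $1$ yields an array on $\lambda$ with entries in $\{0,1,\ldots,n\}$, still weakly decreasing along rows and down columns; a zero forces zeros everywhere weakly below and to the right, so the cells carrying positive entries form a Young diagram $\mu\subseteq\lambda$, the array restricts on $\mu$ to a genuine plane partition with largest entry at most $n$, and the column counts of the values $1,\ldots,n$ are unchanged. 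This is a bijection between the plane partitions counted by the right-hand side and pairs $(\mu,\pi')$ with $\mu\subseteq\lambda$ and $\pi'$ a plane partition of shape $\mu$ with entries $\le n$, which is exactly the branching identity. Specializing $x_i=q_i$ and combining with the displayed equation completes the proof.

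I do not expect a genuine obstacle here; the only points that require a little care are the bookkeeping in the branching step — checking that the zeros of the decremented array form a lower-right order ideal, so that its support is honestly a diagram $\mu\subseteq\lambda$, and that the statistics $c_\ell$ are preserved — and being explicit that the summation range in the cumulative distribution is all of $\{\mu:\mu\subseteq\lambda\}$ with nothing missing, which is precisely what the monotonicity of $G$ guarantees.
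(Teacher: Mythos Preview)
Your overall strategy coincides with the paper's: decompose the cumulative event using the almost-sure monotonicity $G(m,n)\ge\cdots\ge G(1,n)$, apply Theorem~\ref{main} termwise, and then invoke the branching relation $\sum_{\mu\subseteq\lambda}g_\mu(q_1,\ldots,q_n)=g_\lambda(1,q_1,\ldots,q_n)$. The paper simply quotes this relation; you go further and supply a bijective proof, which is a welcome addition.

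There is, however, a bookkeeping slip in that bijection. After invoking symmetry to rewrite the right-hand side as $g_\lambda(x_1,\ldots,x_n,1)$, the weight of a plane partition $\pi$ with entries in $\{1,\ldots,n+1\}$ is indeed $\prod_{\ell=1}^n x_\ell^{c_\ell(\pi)}$. But decrementing every entry by~$1$ sends the value $\ell$ to $\ell-1$, so a column of $\pi$ containing $\ell$ becomes a column of $\pi'$ containing $\ell-1$; hence $c_\ell(\pi')=c_{\ell+1}(\pi)$, not $c_\ell(\pi)$ as you claim. For a concrete witness take $\lambda=(1)$, $n=1$: the filling $\pi=(1)$ has weight $x_1$, yet it decrements to the empty plane partition of weight~$1$. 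The fix is to drop the symmetry step and work directly with $g_\lambda(1,x_1,\ldots,x_n)$: then the variable attached to the entry~$1$ is the one specialised to~$1$, the weight of $\pi$ is $\prod_{\ell=1}^n x_\ell^{\,c_{\ell+1}(\pi)}$, and after decrementing this becomes exactly $\prod_{\ell=1}^n x_\ell^{\,c_\ell(\pi')}$. With that adjustment your argument goes through.
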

\begin{proof}
Theorem~\ref{main} combined with the branching relation
$$
\sum_{\mu \subset \lambda} g_{\mu}(q_1, \ldots, q_n)
= g_{\lambda}(1,q_1, \ldots, q_n)
$$
easily imply the given formula.
\end{proof}
Using Jacobi-Trudi-type determinantal identities for $g_{\lambda}$ (see \cite{dy}) we get the next formulas. 
\begin{corollary}\label{det}
The following formulas hold \footnote{Here $e_{n}$ is the elementary symmetric polynomials, $h_n$ is the complete homogeneous symmetric polynomials, $1^m = (1, \ldots, 1)$ repeated $m$ times, and $\lambda'$ is the conjugate partition of $\lambda$.}
\begin{align*}
\mathrm{Prob}(G({m,n}) = \lambda_1, \ldots, G(1,n) = \lambda_{m}) &= \prod_{i = 1}^{n}(1 - q_i)^{m} \det[e_{\lambda'_i - i + j}(1^{\lambda'_i - 1}, q_1, \ldots, q_n)]_{i,j = 1}^{\lambda_1}, \\
&=\prod_{i = 1}^{n}(1 - q_i)^{m} \det[h_{\lambda_i - i + j}(1^{i - 1}, q_1, \ldots, q_n)]_{i,j=1}^{m}.
\end{align*}
\end{corollary}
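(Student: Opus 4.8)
The plan is to combine Theorem~\ref{main} with two instances of the Jacobi--Trudi-type identities for $g_\lambda$ recorded in \cite{dy}. The substitution on the probability side is the same in both identities, so the only work is to recall the correct dual-Jacobi--Trudi formulas and plug the alphabet $(q_1,\dots,q_n)$ into them, reading off the constant prefactor $\prod_{i=1}^n (1-q_i)^m$ unchanged from Theorem~\ref{main}.

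First I would recall the relevant identities. The dual Grothendieck polynomial $g_\lambda$ admits a ``complete homogeneous'' Jacobi--Trudi expansion in which the $i$-th row of the determinant is evaluated at the alphabet augmented by $i-1$ extra variables equal to $1$, namely
\begin{equation*}
g_\lambda(x) = \det\bigl[h_{\lambda_i - i + j}(1^{i-1}, x)\bigr]_{i,j=1}^{m},
\end{equation*}
valid for any $\lambda$ with at most $m$ parts, and dually an ``elementary'' version indexed by the conjugate partition $\lambda'$,
\begin{equation*}
g_\lambda(x) = \det\bigl[e_{\lambda'_i - i + j}(1^{\lambda'_i - 1}, x)\bigr]_{i,j=1}^{\lambda_1}.
\end{equation*}
These are precisely the $K$-theoretic/inhomogeneous analogues of the classical Jacobi--Trudi and N\"agelsbach--Kostka identities; I would cite \cite{dy} for the exact statements and, if needed, sketch that the extra $1$'s arise from the branching relation $\sum_{\mu\subset\lambda} g_\mu(x) = g_\lambda(1,x)$ used iteratively (the same relation already invoked in the proof of Corollary~\ref{le}).

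Then the proof is immediate: by Theorem~\ref{main},
\begin{equation*}
\mathrm{Prob}(G(m,n)=\lambda_1,\dots,G(1,n)=\lambda_m) = \prod_{i=1}^n (1-q_i)^m\, g_\lambda(q_1,\dots,q_n),
\end{equation*}
and substituting each of the two determinantal expressions for $g_\lambda(q_1,\dots,q_n)$ yields the two displayed formulas verbatim. There is essentially no obstacle beyond bookkeeping; the one point requiring care is matching the index conventions in \cite{dy} — in particular which alphabet ($1^{i-1}$ versus $1^{\lambda'_i-1}$) attaches to which row, and the determinant sizes ($m$ for the $h$-formula, $\lambda_1$ for the $e$-formula) — so that the prefactor and the arguments line up exactly as stated. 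I would verify the formulas on a small example (e.g. $\lambda=(2,1)$, $m=2$, $n=1$) against the plane-partition definition of $g_\lambda$ to confirm the conventions before finalizing.
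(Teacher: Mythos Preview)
Your proposal is correct and matches the paper's approach exactly: the paper simply states that the corollary follows by combining Theorem~\ref{main} with the Jacobi--Trudi-type determinantal identities for $g_\lambda$ from \cite{dy}, which is precisely what you do. The extra bookkeeping remarks and sanity check you include are fine but not needed, as the paper treats this as an immediate consequence without a separate proof.
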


\begin{corollary}[Single point distributions via Schur polynomials]\label{schur}
We have 
$$
\mathrm{Prob}(G(m,n) \le a) = \prod_{i = 1}^{n} (1 - q_i)^{m}\, s_{(a^m)}(1^m, q_1, \ldots, q_n)
$$
\end{corollary}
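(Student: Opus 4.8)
The plan is to derive this as the special case of Corollary~\ref{le} where all $\lambda_k$ coincide. First I would take $\lambda = (a^m) = (a, \ldots, a)$ with $m$ parts in Corollary~\ref{le}. Since $\lambda_1 = \cdots = \lambda_m = a$, the event $\{G(m,n) \le \lambda_1, \ldots, G(1,n) \le \lambda_m\}$ becomes $\{G(m,n) \le a, \ldots, G(1,n) \le a\}$. The key observation is that the last-passage times are monotone: $G(k,n) \le G(k+1,n)$ for all $k$, because any directed path from $(k+1,1)$ to $(m,n)$ can be extended (or rather, because a path to $(m,n)$ passing through row $k$ can be truncated) — more precisely, every directed path contributing to $G(k,n)$ is contained in a directed path contributing to $G(k+1,n)$, so the maximum only grows. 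Hence the simultaneous bound $\{G(m,n)\le a,\ldots,G(1,n)\le a\}$ is equivalent to the single condition $\{G(m,n) \le a\}$, since $G(m,n)$ is the largest of these values.

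Therefore $\mathrm{Prob}(G(m,n) \le a) = \prod_{i=1}^n (1-q_i)^m\, g_{(a^m)}(1, q_1, \ldots, q_n)$ by Corollary~\ref{le}. The remaining step is to identify $g_{(a^m)}(1, q_1, \ldots, q_n)$ with $s_{(a^m)}(1^m, q_1, \ldots, q_n)$. For this I would use the fact, recorded in \cite{lp} (and visible from the combinatorial definition), that for a rectangular shape $\lambda = (a^m)$ the dual Grothendieck polynomial in $N$ variables coincides with the Schur polynomial of the same rectangular shape in $N$ variables: $g_{(a^m)}(y_1, \ldots, y_N) = s_{(a^m)}(y_1, \ldots, y_N)$. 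This holds because a plane partition of rectangular shape $(a^m)$ with entries at most $N$ has, in each of its $a$ columns, a weakly decreasing sequence of $m$ positive entries from $\{1,\ldots,N\}$; reading column fillings shows the generating function over column-content is exactly the Schur function $s_{(a^m)}$ (equivalently, the lower-order corrections that distinguish $g_\lambda$ from $s_\lambda$ vanish for rectangles). Applying this with $N = n+1$ and the variable set $(1, q_1, \ldots, q_n)$, and noting $s_{(a^m)}(1, q_1, \ldots, q_n)$ is what the statement writes as $s_{(a^m)}(1^m, q_1, \ldots, q_n)$ after using that $s_{(a^m)}$ in a rectangular shape with $m$ rows is unchanged by padding the argument $1$ to $1^m$ — since $s_{(a^m)}$ vanishes on fewer than... actually one checks $s_{(a^m)}(1,q_1,\ldots,q_n)=s_{(a^m)}(1^m,q_1,\ldots,q_n)$ directly, e.g. via the dual Jacobi--Trudi / bialternant form — gives the claimed identity.

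The main obstacle I anticipate is pinning down the exact variable-count bookkeeping: reconciling the ``$1$'' coming from the branching identity in Corollary~\ref{le} with the ``$1^m$'' appearing in the statement, and confirming $g_{(a^m)} = s_{(a^m)}$ on rectangles in the relevant number of variables. Both are standard symmetric-function facts, but the cleanest route is probably to invoke the $h$-determinant from Corollary~\ref{det} specialized to $\lambda = (a^m)$: there $\lambda_i - i + j = a - i + j$ and the determinant $\det[h_{a-i+j}(1^{i-1}, q_1, \ldots, q_n)]_{i,j=1}^m$ must be shown to equal the Jacobi--Trudi determinant $\det[h_{a-i+j}(1^m, q_1, \ldots, q_n)]_{i,j=1}^m = s_{(a^m)}(1^m, q_1, \ldots, q_n)$; a short column-operations or telescoping argument handles the mismatch in the number of $1$'s row by row. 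This is the only genuinely computational point, and it is routine.
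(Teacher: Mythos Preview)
Your reduction via Corollary~\ref{le} and the monotonicity $G(1,n)\le\cdots\le G(m,n)$ is correct and is exactly what the paper does: one lands on
\[
\mathrm{Prob}(G(m,n)\le a)=\prod_{i=1}^n(1-q_i)^m\,g_{(a^m)}(1,q_1,\ldots,q_n).
\]
The gap is in your identification of $g_{(a^m)}(1,q_1,\ldots,q_n)$ with $s_{(a^m)}(1^m,q_1,\ldots,q_n)$. Both intermediate claims you invoke are false:
\begin{itemize}
\item $g_{(a^m)}\ne s_{(a^m)}$ in general. Already for $\lambda=(1,1)$ one has $g_{(1,1)}(x_1,x_2)=x_1+x_2+x_1x_2$ while $s_{(1,1)}(x_1,x_2)=x_1x_2$. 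The column weight $x_i^{c_i(\pi)}$ counts whether $i$ appears in a column, not its multiplicity, so repeated entries in a column drop the degree; the ``lower order terms'' do not vanish for rectangles.
\item $s_{(a^m)}(1,q_1,\ldots,q_n)\ne s_{(a^m)}(1^m,q_1,\ldots,q_n)$ in general. For $\lambda=(1,1)$ and $n=1$: $s_{(1,1)}(1,q)=q$ but $s_{(1,1)}(1,1,q)=1+2q$.
\end{itemize}
These two errors happen to cancel in the end, but neither step stands on its own.

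Your fallback via the $h$-determinant of Corollary~\ref{det} faces the genuine obstacle you anticipated: the number of $1$'s is $i-1$ in row $i$, varying with $i$, and turning this into a uniform $1^m$ is not a one-line column operation. The paper instead uses the \emph{first} (elementary) determinant in Corollary~\ref{det}. For a rectangle $\lambda=(a^m)$ one has $\lambda'_i=m$ for every $i\in[1,a]$, so that formula reads
\[
g_{(a^m)}(q_1,\ldots,q_n)=\det\bigl[e_{m-i+j}(1^{m-1},q_1,\ldots,q_n)\bigr]_{i,j=1}^{a},
\]
with the \emph{same} number of $1$'s in every entry. Substituting the extra variable $1$ gives $1^{m}$ everywhere, and the resulting determinant is exactly the dual Jacobi--Trudi expression for $s_{(a^m)}(1^m,q_1,\ldots,q_n)$. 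That is the one-line identification, and it is the route the paper takes.
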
 
\begin{proof}
From Corollary~\ref{le} and the first determinantal formula in Corollary~\ref{det} which coincides with the Jacobi-Trudi determinant for $s_{(a^m)}(1^m,q_1, \ldots, q_n)$ 
we have 
$$
\mathrm{Prob}(G(m,n) \le a) =\prod_{i = 1}^{n} (1 - q_i)^{m}\, g_{(a^m)}(1,q_1,\ldots,q_n) 
= \prod_{i = 1}^{n} (1 - q_i)^{m}\, s_{(a^m)}(1^m, q_1, \ldots, q_n)
$$
as neeeded.
\end{proof}


\begin{remark}
From this formula, via the Jacobi-Trudi identity one can also obtain Toeplitz as well as Fredholm determinantal expressions using the Borodin-Okounkov formula \cite{bo}.
\end{remark} 

\begin{remark}
These distribution formulas were presented in a special {\it iid} case $q_j = q$ in 
\cite{dy5}.
\end{remark}

\section{Generating series identities for $g_{\lambda}$}
By Theorem~\ref{main} we can define the probability distribution $\mathrm{P}_{m,n}$ on the set of integer partitions $\mathcal{P}_m$ by setting
$$
\mathrm{P}_{m,n}(\lambda) := \prod_{i = 1}^{n}(1 - q_i)^{m}\, g_{\lambda}(q_1, \ldots, q_n), \quad \lambda \in \mathcal{P}_{m}. 
$$
In particular, since $\sum_{\lambda \in \mathcal{P}_m} \mathrm{P}_{m,n}(\lambda) = 1$ we immediately obtain the following identity for dual Grothendieck polynomials (it can also be found in \cite{dy4, dy5}).
\begin{corollary}
The following identity holds 
\begin{align*} 
 \sum_{\lambda \in \mathcal{P}_m} g_{\lambda}(q_1, \ldots, q_n) = \prod_{i = 1}^{n} (1 - q_i)^{-m}.
\end{align*}
\end{corollary}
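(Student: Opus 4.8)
The plan is to read off the identity directly from the probabilistic normalization, exactly as sketched in the paragraph preceding the statement. The key input is Theorem~\ref{main}: for every $\lambda \in \mathcal{P}_m$ we have
\[
\mathrm{P}_{m,n}(\lambda) = \mathrm{Prob}\bigl(G(m,n) = \lambda_1, \ldots, G(1,n) = \lambda_m\bigr) = \prod_{i=1}^n (1-q_i)^m\, g_\lambda(q_1,\ldots,q_n).
\]
So the first step is to observe that the tuple $(G(m,n), G(m-1,n), \ldots, G(1,n))$ is a well-defined (finite, a.s.) random vector taking values in $\mathcal{P}_m$ — this is automatic since $G(i,n) \ge G(i',n)$ whenever $i \ge i'$ by the definition of last-passage times (more paths are available from a lower-index starting point), and each $G(i,n)$ is a finite sum of geometric random variables. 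Hence $\{\lambda \in \mathcal{P}_m\}$ indexes a partition of the sample space into disjoint events.

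Second, I would sum $\mathrm{P}_{m,n}(\lambda)$ over all $\lambda \in \mathcal{P}_m$. Since these are the probabilities of mutually exclusive and exhaustive events, the sum equals $1$:
\[
\sum_{\lambda \in \mathcal{P}_m} \prod_{i=1}^n (1-q_i)^m\, g_\lambda(q_1,\ldots,q_n) = 1.
\]
Pulling the constant factor $\prod_{i=1}^n (1-q_i)^m$ out of the sum and dividing through yields
\[
\sum_{\lambda \in \mathcal{P}_m} g_\lambda(q_1,\ldots,q_n) = \prod_{i=1}^n (1-q_i)^{-m},
\]
which is the claimed identity.

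The only genuine subtlety — and the one place I would be careful — is the interchange of summation: the identity as stated is a formal/analytic identity in the indeterminates $q_1,\ldots,q_n \in (0,1)$, and the sum on the left is infinite. This is legitimate because all terms $g_\lambda(q_1,\ldots,q_n)$ are nonnegative for $q_i \in (0,1)$ (they are sums of monomials with positive coefficients), so the series has nonnegative terms and the rearrangement/normalization is valid by monotone convergence; convergence of the total to $1$ is guaranteed by the probabilistic interpretation. Once the identity holds for all $q_i \in (0,1)$, it holds as an identity of formal power series in $q_1,\ldots,q_n$ by standard analytic continuation (both sides are power series agreeing on an open set). Since $n$ is arbitrary, this establishes the identity for $g_\lambda$ in any number of variables. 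No step here presents a real obstacle; the work has already been done in Theorem~\ref{main}.
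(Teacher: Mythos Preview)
Your proposal is correct and follows exactly the paper's own argument: the corollary is deduced immediately from $\sum_{\lambda \in \mathcal{P}_m} \mathrm{P}_{m,n}(\lambda) = 1$ together with the expression for $\mathrm{P}_{m,n}(\lambda)$ from Theorem~\ref{main}. Your additional remarks (monotonicity of the $G(i,n)$, nonnegativity of the terms, passage to a formal identity) are sound refinements of details the paper simply leaves implicit.
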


Next, observe that we have the following {\it marginal distributions} for the parts $\lambda_k$:
$$
\mathrm{P}_{m,n}(\lambda_k \le a) = \mathrm{Prob}(G(m - k + 1, n) \le a)
$$
which give a {\it shift invariance} property
$$
\mathrm{P}_{m,n}(\lambda_k \le a) 
= \mathrm{P}_{m - k + 1,n}(\lambda_1 \le a)
$$
In particular, the last part $\lambda_m$ has distribution as 
$\sum_{i = 1}^n W_i$ for independent $W_i$ geometrically distributed with parameter $q_i$.

\vspace{0.5em}

We now present a new more general identity for dual Grothendieck polynomials.

\begin{theorem}
Let $k \in [1, m]$ and $a \in \mathbb{N}$. The following identity holds 
$$
\sum_{\lambda\in \mathcal{P}_m,\, \lambda_k \le a} g_{\lambda}(q_1, \ldots, q_n) = \prod_{i = 1}^{n} (1 - q_i)^{1-k}\, s_{(a^{m - k + 1})}(1^{m - k + 1}, q_1, \ldots, q_n).
$$
\end{theorem}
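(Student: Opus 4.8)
The plan is to deduce this identity directly from Theorem~\ref{main}, the marginal distribution formula for $\lambda_k$, and Corollary~\ref{schur}; no new combinatorial construction should be required, so the whole argument is a short computation in the probabilistic language already set up.

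First I would translate the left-hand side into a probability. By the definition of the distribution $\mathrm{P}_{m,n}$ on $\mathcal{P}_m$ we have
$$
\sum_{\lambda \in \mathcal{P}_m,\ \lambda_k \le a} g_{\lambda}(q_1, \ldots, q_n)
= \prod_{i=1}^{n} (1-q_i)^{-m} \sum_{\lambda \in \mathcal{P}_m,\ \lambda_k \le a} \mathrm{P}_{m,n}(\lambda)
= \prod_{i=1}^{n} (1-q_i)^{-m}\, \mathrm{P}_{m,n}(\lambda_k \le a).
$$
Next I would invoke the marginal identity $\mathrm{P}_{m,n}(\lambda_k \le a) = \mathrm{Prob}(G(m-k+1,n) \le a)$, which is immediate from Theorem~\ref{main} since under the event $\{G(m,n)=\lambda_1,\ldots,G(1,n)=\lambda_m\}$ one has $\lambda_k = G(m-k+1,n)$. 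Applying Corollary~\ref{schur} with $m$ replaced by $m-k+1$ then gives
$$
\mathrm{Prob}(G(m-k+1,n) \le a) = \prod_{i=1}^{n} (1-q_i)^{m-k+1}\, s_{(a^{m-k+1})}(1^{m-k+1}, q_1, \ldots, q_n).
$$
Substituting this into the previous display, the exponents of $(1-q_i)$ combine as $-m+(m-k+1)=1-k$, which is exactly the claimed formula.

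There is essentially no obstacle here: all three ingredients are already available in the paper, and the one genuinely nontrivial input — that the pertinent Jacobi–Trudi determinant collapses to the single Schur polynomial $s_{(a^{m-k+1})}$ — was handled in the proof of Corollary~\ref{schur}. If one wishes to avoid probability entirely, the same identity can be derived algebraically: iterate the branching relation $\sum_{\mu \subset \lambda} g_{\mu}(q_1,\ldots,q_n) = g_{\lambda}(1,q_1,\ldots,q_n)$ to rewrite $\sum_{\lambda_k \le a} g_{\lambda}(q_1,\ldots,q_n)$ as a telescoping sum that collapses to $g_{(a^{m-k+1})}$ evaluated with $k-1$ extra arguments equal to $1$, and then identify this with the Schur polynomial as in Corollary~\ref{schur}. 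The probabilistic route above is the shortest, so that is the one I would write out.
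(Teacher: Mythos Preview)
Your proposal is correct and follows essentially the same route as the paper's own proof: both arguments rewrite the sum as $\prod_i(1-q_i)^{-m}\,\mathrm{P}_{m,n}(\lambda_k\le a)$, invoke the marginal identification $\mathrm{P}_{m,n}(\lambda_k\le a)=\mathrm{Prob}(G(m-k+1,n)\le a)$ from Theorem~\ref{main}, and finish via Corollary~\ref{schur}. Your aside about an alternative purely algebraic derivation is extra, but the main line of argument matches the paper exactly.
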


\begin{proof}
Recall that we have the marginal distributions 
$$
\mathrm{P}_{m,n}(\lambda_k \le a) 
= \mathrm{Prob}(G(m-k+1, n) \le a)
$$
Using Corollary~\ref{schur}, we have
$$
\mathrm{Prob}(G(m-k+1, n) \le a) = \prod_{i = 1}^{n} (1 - q_i)^{m-k+1} s_{(a^{m - k + 1})}(1^{m - k + 1}, q_1, \ldots, q_n).
$$
On the other hand, by definition of the distribution $\mathrm{P}_{m,n}$ above, we get
$$
\mathrm{P}_{m,n}(\lambda_k \le a) = \prod_{i = 1}^n (1 - q_i)^m\, \sum_{\lambda \in \mathcal{P}_m,\, \lambda_k \le a} g_{\lambda}(q_1, \ldots, q_n)
$$
Combining the last two identities we obtain the needed.
\end{proof}

\begin{corollary}
For $k = m$ we obtain the following identity
$$
\sum_{\lambda\in \mathcal{P}_m,\, \lambda_m = a} g_{\lambda}(q_1, \ldots, q_n) = \prod_{i = 1}^{n} (1 - q_i)^{1-m}\, {h_{a}(q_1, \ldots, q_n)}{}
$$
\end{corollary}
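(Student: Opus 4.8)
The plan is to read this off as the first difference, in the variable $a$, of the cumulative identity just proved. Taking $k = m$ in the theorem and using that a one-row Schur polynomial coincides with the complete homogeneous symmetric polynomial, so that $s_{(a)} = h_a$ and $1^{m-k+1} = 1$, we get
$$\sum_{\lambda \in \mathcal{P}_m,\, \lambda_m \le a} g_\lambda(q_1, \ldots, q_n) = \prod_{i=1}^n (1-q_i)^{1-m}\, h_a(1, q_1, \ldots, q_n),$$
valid for every $a \in \mathbb{N}$.

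Next I would subtract the $(a-1)$-st instance of this identity from the $a$-th one. On the left, the sums over $\{\lambda_m \le a\}$ and $\{\lambda_m \le a-1\}$ differ exactly by the sum over $\{\lambda_m = a\}$ (with the convention that the empty sum, equivalently $h_{-1} = 0$, handles $a = 0$), so
$$\sum_{\lambda \in \mathcal{P}_m,\, \lambda_m = a} g_\lambda(q_1, \ldots, q_n) = \prod_{i=1}^n (1-q_i)^{1-m}\, \bigl( h_a(1, q_1, \ldots, q_n) - h_{a-1}(1, q_1, \ldots, q_n) \bigr).$$

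Finally I would apply the identity $h_a(1, x_1, \ldots, x_n) - h_{a-1}(1, x_1, \ldots, x_n) = h_a(x_1, \ldots, x_n)$, which follows at once from generating functions: since $\sum_{a \ge 0} h_a(1, x)\, t^a = (1-t)^{-1} \prod_{i}(1 - x_i t)^{-1}$, multiplying by $(1-t)$ cancels the extra factor and leaves $\sum_{a \ge 0} h_a(x)\, t^a$. Substituting this into the previous display gives precisely the asserted formula. I do not expect a real obstacle: this is a short deduction, and the only points requiring slight care are the boundary case $a = 0$ and the order of operations — one should make the $s_{(a)} = h_a$ specialization and keep the variable $1$ inside until after forming the difference.
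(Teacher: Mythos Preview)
Your proposal is correct and follows essentially the same route as the paper: specialize $k=m$ in the preceding theorem to obtain the cumulative identity with $h_a(1,q_1,\ldots,q_n)$, then take the first difference in $a$ and apply the recurrence $h_a(1,q_1,\ldots,q_n)-h_{a-1}(1,q_1,\ldots,q_n)=h_a(q_1,\ldots,q_n)$. Your additional remarks (that $s_{(a)}=h_a$, the generating-function justification of the $h$-recurrence, and the $a=0$ boundary via $h_{-1}=0$) just make explicit what the paper leaves implicit.
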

\begin{proof}
For $k = m$ we have 
$$
\sum_{\lambda\in \mathcal{P}_m,\, \lambda_m \le a} g_{\lambda}(q_1, \ldots, q_n) = \prod_{i = 1}^{n} (1 - q_i)^{1-m}\, {h_{a}(1,q_1, \ldots, q_n)}{}.
$$
Now the following recurrence relation for the polynomials $h$,
$$
h_{a}(1,q_1, \ldots, q_n) - h_{a-1}(1,q_1, \ldots, q_n) = h_{a}(q_1, \ldots, q_n)
$$
then gives the needed identity.
\end{proof}

\begin{remark}
There is one more connection of  dual Grothendieck polynomials with the corner growth model via {\it positive specializations} of $\{g_{\lambda}\}$, presented in \cite{dy3}. The distribution $\mathrm{P}_{m,n}$ can also be extended for any positive specialization as we discuss it for a special example in the next section. 
\end{remark}

\section{Plancherel limit and longest increasing subsequences}
Consider the specialization $q_i = {\gamma}/{n}$ for all $i \in [1,n]$, $\gamma > 0$ and let $n \to \infty$. We obtain
$$
\lim_{n \to \infty} \mathrm{P}_{m,n}(\lambda)= e^{-m \gamma} \lim_{n \to \infty} g_{\lambda}(\underbrace{\gamma/n, \ldots, \gamma/n}_{n \text{ times}}) = e^{-m \gamma} \sum_{n} \mathrm{P}_{\mathrm{gpl}, m, n}(\lambda) \frac{(m \gamma)^n}{n!},
$$
where $\mathrm{P}_{\mathrm{gpl}, m, n}(\lambda)$ is a probability distribution on the set of partitions $\lambda \subset (n^m)$, defined below.

To define it, we need a generalization of standard Young tableaux. 
A plane partition $\pi$ is called a {\it strict tableau} (ST) if for some $n$, each entry $i \in [n] := \{1, \ldots, n\} $ appears in exactly one column of $\pi$. We then say that $[n]$ is a {\it content} of $\pi$. Let $\mathrm{ST}(\lambda, n)$ be the set of ST of shape $\lambda$ with content $[n]$ and $f_{\lambda}(n) = |\mathrm{ST}(\lambda, n)|$.

\begin{lemma}[\cite{dy4}]
We have 
$$\mathrm{P}_{\mathrm{gpl}, m,n}(\lambda) = \frac{f_{\lambda}(n)}{m^n}$$ is a well-defined probability measure on the set of integer partitions $\lambda \subset (n^m)$. 
\end{lemma}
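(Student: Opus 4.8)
The plan is to first make the displayed limit precise — this is what actually defines $\mathrm{P}_{\mathrm{gpl},m,n}$ — and then to read off the normalization from the summation identity $\sum_{\lambda\in\mathcal{P}_m}g_{\lambda}=\prod_i(1-q_i)^{-m}$ established above.

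For the limit I would examine the equal-variable specialization of $g_{\lambda}$. Writing $c(\pi):=\sum_i c_i(\pi)$, the combinatorial formula gives $g_{\lambda}(\underbrace{t,\dots,t}_{N})=\sum_{k}a_{\lambda,N}(k)\,t^{k}$, where $a_{\lambda,N}(k)$ counts plane partitions $\pi$ of shape $\lambda$ with largest entry $\le N$ and $c(\pi)=k$; since $c(\pi)\le|\lambda|$ this is a finite sum. I would group the plane partitions counted by $a_{\lambda,N}(k)$ according to the set $S\subseteq[N]$ of values actually occurring in $\pi$; as every occurring value lies in at least one column, $|S|=j\le k$. The order-preserving relabeling $S\to[j]$ should be a bijection onto plane partitions of shape $\lambda$ with value set exactly $[j]$ and $c(\pi)=k$ (it changes neither the column structure nor the monotonicity), a set of some size $b_{\lambda,k,j}$ independent of $S$; hence
$$a_{\lambda,N}(k)=\sum_{j=1}^{k}\binom{N}{j}\,b_{\lambda,k,j},$$
a polynomial in $N$ of degree $\le k$ whose $N^{k}$-coefficient is $b_{\lambda,k,k}/k!$. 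Finally, a plane partition with value set $[k]$ and $c(\pi)=k$ has each value confined to a single column (the positive integers $c_1(\pi),\dots,c_k(\pi)$ summing to $k$ are all $1$), so $b_{\lambda,k,k}=f_{\lambda}(k)$. Consequently
$$g_{\lambda}\!\left(\tfrac{\gamma}{N},\dots,\tfrac{\gamma}{N}\right)=\sum_{k}a_{\lambda,N}(k)\left(\tfrac{\gamma}{N}\right)^{k}\ \xrightarrow{\,N\to\infty\,}\ \sum_{k}\frac{f_{\lambda}(k)}{k!}\,\gamma^{k},$$
and since $(1-\gamma/N)^{mN}\to e^{-m\gamma}$ this gives $\lim_{N}\mathrm{P}_{m,N}(\lambda)=e^{-m\gamma}\sum_{k}\frac{f_{\lambda}(k)}{m^{k}}\frac{(m\gamma)^{k}}{k!}$, which is the displayed formula with $\mathrm{P}_{\mathrm{gpl},m,k}(\lambda)=f_{\lambda}(k)/m^{k}$. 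Along the way one sees $f_{\lambda}(k)=0$ unless $\lambda\subseteq(k^{m})$: a strict tableau with content $[k]$ has $c(\pi)=k\ge\lambda_1$ (each nonempty column contributes at least $1$ to $c$), while $\ell(\lambda)\le m$ for $\lambda\in\mathcal{P}_m$.

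It then remains to show $\sum_{\lambda\subseteq(n^{m})}f_{\lambda}(n)=m^{n}$, which simultaneously gives total mass $1$ and the stated support. For this I would specialize $\sum_{\lambda\in\mathcal{P}_m}g_{\lambda}(q_1,\dots,q_N)=\prod_{i=1}^{N}(1-q_i)^{-m}$ at $q_1=\dots=q_N=t$, getting $\sum_{\lambda\in\mathcal{P}_m}g_{\lambda}(t,\dots,t)=(1-t)^{-mN}$, and compare coefficients of $t^{n}$:
$$\sum_{\lambda\in\mathcal{P}_m}a_{\lambda,N}(n)=\binom{mN+n-1}{n}.$$
The left side is a finite sum (only $\lambda\subseteq(n^{m})$ contribute, by the shape bounds noted above) of the polynomials in $N$ found above, so matching the coefficients of $N^{n}$ on both sides yields $\tfrac1{n!}\sum_{\lambda\subseteq(n^{m})}f_{\lambda}(n)=\tfrac{m^{n}}{n!}$, as required.

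I expect the main difficulty to be the polynomiality step: verifying carefully that the set of occurring values may be chosen freely among the $j$-subsets of $[N]$ with the residual combinatorial pattern unaffected (the content of the relabeling bijection), and that the leading term is exactly $f_{\lambda}(k)$; the rest is bookkeeping with corollaries already in the paper. Alternatively one could prove $\sum_{\lambda\subseteq(n^m)}f_{\lambda}(n)=m^{n}$ by a direct bijection between strict tableaux with at most $m$ rows and content $[n]$ and words in $\{1,\dots,m\}^{n}$, which would make the support claim transparent too; I expect the generating-function route above to be shorter given the preceding material.
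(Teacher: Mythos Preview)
The paper does not actually prove this lemma; it is quoted from \cite{dy4} without argument, so there is no in-paper proof to compare against. Your argument, however, is correct. The polynomiality step is fine: grouping plane partitions of shape $\lambda$ with $c(\pi)=k$ by their value set $S\subseteq[N]$ and relabelling $S$ order-preservingly to $[|S|]$ is a genuine bijection that leaves the multiset $(c_i)$ and the shape unchanged, so $a_{\lambda,N}(k)=\sum_{j\le k}\binom{N}{j}b_{\lambda,k,j}$ holds for every positive integer $N$ (the upper limit $\min(k,N)$ can be replaced by $k$ since $\binom{N}{j}=0$ for $j>N$). The identification $b_{\lambda,k,k}=f_{\lambda}(k)$ is exactly the definition of a strict tableau, and the support bound $\lambda_1\le n$ follows from $c(\pi)\ge$ (number of nonempty columns). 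Your extraction of the $N^{n}$-coefficient from $\sum_{\lambda\in\mathcal{P}_m}a_{\lambda,N}(n)=\binom{mN+n-1}{n}$ is legitimate because both sides are polynomials in $N$ agreeing at all positive integers; the leading term on the right is indeed $m^{n}/n!$.

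As for the comparison: the source \cite{dy4} (and the remark following Theorem~14 here about ``an analogue of Green's theorem for RSK'') points to a bijective proof --- essentially an insertion correspondence between words in $[m]^{n}$ and strict tableaux with at most $m$ rows and content $[n]$ --- which is the alternative you mention at the end. Your generating-function route is shorter given the summation identity already in the paper and avoids building any new combinatorial machinery; the bijective route, on the other hand, gives more: it simultaneously yields Theorem~14 (the joint distribution of the $L_i$) and makes the support condition $\lambda\subseteq(n^m)$ immediate rather than a byproduct of coefficient matching.
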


Let $W_{n,m}$ be the set of words of length $n$ in the alphabet $[m]$. 
For a word $w  = w_1 \cdots w_n \in W_{m,n}$, a {\it weakly increasing subsequence} is a sequence of the form
$$
w_{i_1} \le \cdots \le w_{i_k}, \quad 1 \le i_1 < \cdots < i_k \le n,
$$
where $k$ is its length. Let $L_{i}(w)$ be the length of the longest weakly increasing subsequence of $w$ using the letters $\{m-i+1,\ldots, m \}$. In particular, $L_{1}(w)$ is just the number of $m$'s in $w$ and $L_{m}(w)$ is the length of the longest weakly increasing subsequence of $w$. 

\vspace{0.5em}

Consider the uniform probability measure on $W_{n,m}$. Then we have the following analogue of Theorem~\ref{main} in this Plancherel limit regime.

\begin{theorem}
We have
$$
\mathrm{Prob}(L_{m} = \lambda_1, \ldots, L_{1} = \lambda_m) = \mathrm{P}_{\mathrm{gpl}, m, n}(\lambda).
$$
\end{theorem}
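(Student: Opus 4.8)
The statement is the Plancherel-limit analogue of Theorem~\ref{main}, and the natural strategy is to mirror the proof of Theorem~\ref{main} step by step, replacing the geometric random matrix by a uniform random word and the bijection $\Phi$ by its degeneration to strict tableaux. First I would set up the dictionary: a uniform word $w \in W_{n,m}$ of length $n$ in the alphabet $[m]$ corresponds, under a limiting version of the correspondence in Section~2, to a strict tableau. The key observation is that a word $w = w_1 \cdots w_n$ records, for each position $j \in [n]$, which row $i = w_j$ of a plane partition the letter $j$ ``descends'' from; this is exactly the data of a matrix $D = (d_{i\ell})$ in which $d_{i\ell} \in \{0,1\}$ and each column $\ell$ has exactly one nonzero entry (column $\ell$ encodes where letter $\ell$ sits). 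So $\Phi$ restricts to a bijection between strict tableaux of content $[n]$ and with at most $m$ rows, and words in $W_{n,m}$; the number of such tableaux of shape $\lambda$ is $f_\lambda(n) = |\mathrm{ST}(\lambda,n)|$, whence a uniformly random word of length $n$ produces a strict tableau of shape $\lambda$ with probability $f_\lambda(n)/m^n = \mathrm{P}_{\mathrm{gpl},m,n}(\lambda)$. This is the analogue of Lemma~\ref{gen}.

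**Key steps.** Second, I would prove the analogue of Lemma~\ref{one}: if $w \leftrightarrow \pi$ under this correspondence and $\pi$ has shape $\lambda$, then for all $k \in [1,m]$,
$$
\lambda_k = \max_{\Pi : (k,1) \to (m,n)} \sum_{(i,\ell)\in\Pi} d_{i\ell},
$$
and this maximum equals the length $L_{m-k+1}(w)$ of the longest weakly increasing subsequence of $w$ using only the letters $\{k, k+1, \ldots, m\}$. The first equality is literally Lemma~\ref{one} specialized to the $0/1$ case. For the second equality, a directed path from $(k,1)$ to $(m,n)$ that is weighted only by the (at most one) nonzero entry in each column corresponds to choosing, for an increasing run of column-indices $\ell_1 < \cdots < \ell_s$, rows $i_1 \le \cdots \le i_s$ with each $i_j \ge k$ and $d_{i_j \ell_j}=1$; since column $\ell$ is ``occupied'' by row $w_\ell$, the condition $d_{i_j\ell_j}=1$ with $i_j \ge k$ says exactly that $w_{\ell_j} \ge k$, and the monotonicity $i_1 \le \cdots \le i_s$ — forced by the plane-partition structure on which rows entries descend into — translates to $w_{\ell_1} \le \cdots \le w_{\ell_s}$. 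So optimal directed paths are in weight-preserving correspondence with longest weakly increasing subsequences in the letters $\{k,\ldots,m\}$, i.e. $\lambda_k = L_{m-k+1}(w)$. Re-indexing $i := m-k+1$ gives $\lambda_{m-i+1} = L_i(w)$, that is $L_m = \lambda_1, \ldots, L_1 = \lambda_m$.

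**Conclusion.** Third, combining the two steps: for fixed $\lambda \subset (n^m)$,
$$
\mathrm{Prob}(L_m = \lambda_1, \ldots, L_1 = \lambda_m) = \sum_{\substack{w \in W_{n,m} \\ \mathrm{sh}(\pi(w)) = \lambda}} \frac{1}{m^n} = \frac{f_\lambda(n)}{m^n} = \mathrm{P}_{\mathrm{gpl},m,n}(\lambda),
$$
where $\pi(w) = \Phi^{-1}(w)$ and we used that $w \leftrightarrow \pi$ with $\mathrm{sh}(\pi)=\lambda$ forces the event $\{L_m=\lambda_1,\ldots,L_1=\lambda_m\}$ to hold. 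Alternatively, one can derive the identity directly from the Plancherel-limit computation displayed before the statement together with Theorem~\ref{main}, by matching coefficients of $(m\gamma)^n/n!$: the coefficient of that monomial in $e^{m\gamma}\lim_n \mathrm{P}_{m,n}(\lambda)$ on the one hand is $\mathrm{P}_{\mathrm{gpl},m,n}(\lambda)$ by definition, and on the other hand, expanding the geometric model at $q_i=\gamma/n$ and tracking which terms survive as $n\to\infty$, exactly the strict-tableau contributions survive with weight $1/m^n$.

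**Main obstacle.** The crux is the combinatorial identification in the second step — proving that optimal directed paths in the degenerate $0/1$ matrix $\Phi(\pi)$ are exactly longest weakly increasing subsequences of $w$ restricted to a suffix of the alphabet. Once one sets up the correspondence between words and strict tableaux carefully (in particular, that the ``landing rows'' of the letters of $w$, read in increasing order of letter value, are weakly increasing precisely because $\pi$ is a plane partition), Lemma~\ref{one} does almost all the remaining work, and the rest is bookkeeping. A subtlety to check is that distinct words give distinct strict tableaux and that every strict tableau with at most $m$ rows arises, so that the fibers of $w \mapsto \mathrm{sh}(\pi(w))$ have size exactly $f_\lambda(n)$; this is where one invokes that $\Phi$ is a bijection (Section~2) and verifies that it carries the ``$0/1$ with one nonzero per column, at most $m$ rows'' matrices onto exactly $\mathrm{ST}(\lambda,n)$ over all $\lambda \subset (n^m)$.
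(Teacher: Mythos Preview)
The paper does not actually give a proof of this theorem: it only says that ``a combinatorial version of this result (an analogue of Green's theorem for RSK), which can be turned into this statement, is proved in \cite{dy4}.'' Your proposal, by contrast, supplies a self-contained argument, and it is essentially correct. The dictionary you set up is right: under $\Phi$, strict tableaux of content $[n]$ with at most $m$ rows correspond exactly to $m\times n$ $\{0,1\}$-matrices with one nonzero entry per column (because $\sum_i d_{i\ell}=c_\ell(\pi)=1$), i.e.\ to words $w\in W_{n,m}$ via $d_{w_\ell,\ell}=1$. Lemma~\ref{one} then specializes cleanly, and your identification of the directed-path maximum with $L_{m-k+1}(w)$ is correct: a directed path from $(k,1)$ to $(m,n)$ picks up weight exactly at those columns $\ell$ for which it passes through $(w_\ell,\ell)$, and the columns where this happens form a weakly increasing subsequence of $w$ in the letters $\{k,\ldots,m\}$ (and conversely any such subsequence can be hit by some directed path).

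One small expository correction: when you write that the monotonicity $i_1\le\cdots\le i_s$ is ``forced by the plane-partition structure on which rows entries descend into,'' that is not quite the reason. The monotonicity of the row indices along the cells where the path picks up weight is forced simply by the path being \emph{directed} (rows are nondecreasing along any directed path). The plane-partition structure is what makes $\Phi$ a bijection and hence guarantees that every word arises from a unique strict tableau; it is not what gives you the inequality $w_{\ell_1}\le\cdots\le w_{\ell_s}$ in this step. With that clarification, your argument goes through, and your closing remark about the alternative analytic route (matching coefficients of $(m\gamma)^n/n!$ in the Plancherel limit of Theorem~\ref{main}) is also a valid, if more roundabout, way to the same conclusion.
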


A combinatorial version of this result (an analogue of Green's theorem for RSK), which can be turned into this statement, is proved in \cite{dy4}.
The distribution of $L_m$ was studied in \cite{tw} from which we obtain that for fixed $m$ the limiting distribution of the first row $\lambda_1$ satisfies
$$
\lim_{n \to \infty} \mathrm{P}_{\mathrm{gpl},m,n}\left(\frac{\lambda_1 - n/m}{\sqrt{2n/m}} \le t \right) = \mathrm{P}_{\mathrm{GUE}^0_{m}} (\lambda_{\max} \le t),
$$
where the r.h.s. is the distribution of the largest eigenvalue in $m \times m$  traceless Gaussian unitary ensemble (GUE).
In addition, note that $\lambda_m$ has binomial distribution with parameters $n$ and $1/m$ and hence after proper scaling it converges to normal distribution. Now, what is limiting joint distribution of the properly scaled shape $\lambda$ when $m$ is fixed? This would compare to the results in \cite{joh3} on limiting distribution of the shape for a random word under the RSK correspondence which converges to the spectrum of traceless GUE. 
\section*{Acknowledgements}

I am grateful to Askar Dzhumadil'daev, Igor Pak, Pavlo Pylyavskyy, and Leonid Petrov for many helpful conversations. 


\end{document}